\documentclass{amsart}
\usepackage{amsmath,amssymb,amsthm}
\usepackage[all]{xy}
\newtheorem{dfn}{Definition}[section]
\newtheorem{thm}[dfn]{Theorem}
\newtheorem{lem}[dfn]{Lemma}

\newtheorem{prop}[dfn]{Proposition}

\newtheorem{conj}[dfn]{Conjecture}

\newcommand{\Hodge}{H^{2,2}(X,\mathbb{Z})}

\newcommand{\disc}{\mathrm{disc}}

\newcommand{\X}{\tilde{X}}

\newcommand{\ZZ}{\mathbb{Z}}

\newcommand{\CC}{\mathbb{C}}

\newcommand{\A}{\mathcal{A}}
\newcommand{\C}{\mathcal{C}}

\newcommand{\PP}{\mathbb{P}}
\begin{document}
\title[Certain rational cubic fourfolds]{Hilbert schemes of two points on K3 surfaces and certain rational cubic fourfolds}
\author{GENKI OUCHI}
\email{genki.ouchi@riken.jp}
\address{Interdisciplinary Theoretical and Mathematical Sciences Program (iTHEMS),
RIKEN, 2-1 Hirosawa, Wako, Saitama 351-0198, Japan
}
\maketitle
\begin{abstract}
In this paper, we check that Fano schemes of lines on certain rational cubic fourfolds are birational to Hilbert schemes of two points on K3 surfaces. 
\end{abstract}
\section{INTRODUCTION} 
\subsection{Background and Result}
The rationality problem of cubic fourfolds is one of long standing problems in algebraic geometry. The following are known examples of rational cubic fourfolds. 

\begin{itemize}
\item[(i)]special cubic fourfolds of discriminant $14$ (\cite{BRS}, \cite{Tre84}, \cite{RS})
\item[(ii)]special cubic fourfolds of discriminant $26, 38$ (\cite{RS})
\item[(iii)]some cubic fourfolds containing a plane (\cite{Has00})
\item[(iv)]some cubic fourfolds containing an elliptic ruled surface of degree $6$ (\cite{AHTVA})

\end{itemize}
Conjectually, very general cubic fourfolds are irrational. However, there are no known irrational cubic fourfolds so far. Inspired by weak factorization theorem \cite{AKMW}, \cite{W}, Kuznetsov \cite{Kuz10}, \cite{Kuz16} proposed the following conjecture.

\begin{conj}\label{Kuzconj}$($\cite{Kuz10}$)$
Let $X$ be a cubic fourfold. Consider the semiorthogonal decomposition
\[ D^b(X)= \langle \A_X, \mathcal{O}_X, \mathcal{O}_X(1), \mathcal{O}_X(2) \rangle . \]
Then $X$ is rational if and only if $\A_X$ is equivalent to a derived category of a K3 surface.
\end{conj}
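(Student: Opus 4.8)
The plan is to treat the two implications separately, working throughout on the period domain of cubic fourfolds and using the identification, due to Hassett and to Addington--Thomas, of the locus on which the Kuznetsov component $\A_X$ is a K3 category. Recall that $\A_X$ is a $2$-Calabi--Yau category whose Mukai--Hodge structure is a weight-two Hodge structure of K3 type, and that $\A_X$ is equivalent to $D^b(S)$ for a K3 surface $S$ exactly when $X$ lies on a Hassett divisor $\mathcal{C}_d$ with $d$ satisfying the Addington--Thomas numerical condition (so that the Mukai lattice of $\A_X$ contains a primitive embedding of the K3 lattice compatible with an algebraic class). Thus the hypothesis that $\A_X \cong D^b(S)$ carves out a known, countable union of divisors in moduli, and the substance of the conjecture is to match this union with the rational locus. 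Concretely I would aim to show $(\Leftarrow)$ that every cubic on such a divisor is rational, and $(\Rightarrow)$ that a rational cubic must lie on such a divisor, i.e.\ must acquire a K3 Hodge structure in its Kuznetsov component.

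For the $(\Leftarrow)$ direction I would proceed divisor by divisor. Fixing an admissible $d$, the goal is an explicit rational parametrization of the generic member of $\mathcal{C}_d$, extending the constructions behind the known cases $d=14,26,38$ and the plane and elliptic-ruled-surface families. The organizing principle is that on $\mathcal{C}_d$ the Fano variety of lines $F(X)$ becomes birational to a moduli space of objects in $\A_X$, hence to $S^{[2]}$; I would try to convert the extra rational curves or surfaces recorded by the K3 class into a congruence of rational scrolls sweeping out $X$ itself and so producing the birational map to $\PP^4$. To upgrade rationality of the generic member to the whole divisor I would invoke specialization of rationality for families of cubic fourfolds, combined with the fact that the very general member already carries the full K3 structure.

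For the $(\Rightarrow)$ direction I would try to read the rationality off a resolved birational map from $X$ to $\PP^4$. By weak factorization such a map decomposes into blow-ups and blow-downs along smooth centers; tracking the effect of each elementary transformation on the semiorthogonal decomposition, via Orlov's blow-up formula and the compatibility of $\A_X$ with these operations, one would hope to exhibit $\A_X$ as assembled from the derived categories of the centers, all rational of dimension $\le 2$, and to isolate from the surface centers a single K3 summand matching the $2$-Calabi--Yau, K3-type Hodge structure of $\A_X$. The guiding hope is that the transcendental part of the Hodge structure sitting inside $\A_X$ is a stable birational invariant, hence realized by an actual surface appearing in the factorization, and that a weight-two structure of K3 type forces that surface to be a K3 surface.

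The decisive obstacle is that neither direction is within reach of current methods, and I should be explicit about why. For $(\Rightarrow)$ there is at present no irrationality criterion for cubic fourfolds whatsoever: not a single cubic is known to be irrational, so one cannot test, let alone prove, that rationality forces the K3 structure; moreover the weak-factorization bookkeeping does not actually constrain the transcendental lattice, since blow-downs can destroy the naive correspondence and no birational invariant is known to detect failure of the K3 condition. For $(\Leftarrow)$, even granting a K3 category the hypothesis is purely Hodge-theoretic and produces no map to projective space, while the divisor-by-divisor strategy must handle infinitely many $d$ with no uniform construction in sight, the rationality of the generic member being open beyond $d=14,26,38$. A complete proof of the biconditional is therefore beyond present technology; the realistic deliverable is to enlarge the list of admissible discriminants on which the $(\Leftarrow)$ direction is verified, which is the direction pursued in the remainder of this paper.
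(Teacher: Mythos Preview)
The statement you are attempting to prove is not a theorem in the paper: it is Kuznetsov's rationality conjecture, recorded there as a \texttt{conj} environment with a citation to \cite{Kuz10}, and the paper offers no proof of it. Indeed the surrounding text emphasizes that no cubic fourfold is known to be irrational, so the $(\Rightarrow)$ direction cannot even be tested on a single example. There is therefore no ``paper's own proof'' against which to compare your proposal.

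Your proposal is not so much a proof as a survey of why the conjecture is hard, and on that level it is accurate. You correctly locate the $(\Leftarrow)$ direction as a divisor-by-divisor problem with only $d=14,26,38$ (and the special families (iii), (iv)) settled, and you correctly identify the fatal obstruction to $(\Rightarrow)$: weak factorization gives no control over the transcendental lattice, and no birational invariant is known that detects failure of the K3 condition. Your final paragraph explicitly concedes that a complete proof is out of reach. That concession is the right conclusion, and it matches the paper's stance: the paper does not attack Conjecture~\ref{Kuzconj} at all, but instead verifies the unrelated (and unconditional) statement that the Fano schemes of lines on the rational cubics in families (iii) and (iv) are birational to $S^{[2]}$ for some K3 surface $S$.

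In short, there is no gap to name in your argument because you do not claim to have one; but you should not frame this as a proof proposal for the conjecture, since neither you nor the paper proves it.
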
 
Conjecture \ref{Kuzconj} implies irrationality of very general cubic fourfolds since $\A_X$ is not equivalent to derived categories of K3 surfaces for a very general cubic fourfold $X$. Kuznetsov \cite{Kuz10} proved Conjecture \ref{Kuzconj} for Pfaffian cubic fourfolds, which are members in (i), and cubic fourfolds in (iii). Hassett studied Noether-Lefschetz divisors $\C_d$ of the moduli space $\C$ of cubic fourfolds. 
 A cubic fourfold $X$ belongs to $\mathcal{C}_d$ if and only if there is a primitive sublattice $K \subset H^{2,2}(X,\mathbb{Z})$ generated by $H^2, T \in H^{2,2}(X,\mathbb{Z})$ such that $d=\mathrm{disc}K$. Here, $H$ is the hyperplane class of $X$ and the discriminant $\mathrm{disc}K$ of $K$ is the determinant of an intersection matrix of $K$. Elements in $\C_d$ are called special cubic fourfolds of discriminant $d$. Hassett introduced the following two conditions on an integer $d$.
 
 \begin{itemize}
\item[($*$)] $d>6$ and $d \equiv 0$ or $2$ (mod $6$)
\item[($**$)]  $d$ is not divisible by $4$, $9$, or any odd prime $p \equiv 2$ (mod $3$)
\end{itemize}
The condition ($*$) is shown to be equivalent to the non-emptyness of $\C_d$ in \cite{Has00}. 
For the condition ($**$), the following theorem holds.

 \begin{prop}\label{HasK3}$($\cite{Has00}$)$\
 Let $d$ be an integer satisfying \rm{(}$*$). \it{}The integer $d$ satisfies \rm{(}$**$) \it{} if and only if for any cubic fourfold $X \in \C_d$, there is a polarized K3 surface $S$ of degree $d$ and a rank two discriminant $d$ primitive sublattice $K \subset H^{2,2}(X,\mathbb{Z})$ containing $H^2$ with a Hodge isometry $K^{\bot} \simeq H^2_{\mathrm{prim}}(S,\mathbb{Z})(-1)$. Here, the orthogonal complement $K^{\bot}$ is taken in $H^4(X,\mathbb{Z})$.
 \end{prop}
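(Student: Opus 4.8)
The plan is to reduce the statement to a comparison of two lattices equipped with Hodge structures, to carry out the lattice comparison by the theory of discriminant forms, and then to upgrade an abstract isometry to a Hodge isometry using the surjectivity of the period map for K3 surfaces. First I would recall the structure of the middle cohomology of a cubic fourfold: the lattice $(H^4(X,\mathbb{Z}),\cup)$ is odd unimodular of signature $(21,2)$, the class $H^2$ satisfies $(H^2)^2=3$, and the primitive part $L:=\langle H^2\rangle^{\perp}=H^4_{\mathrm{prim}}(X,\mathbb{Z})$ is the even lattice $A_2\oplus U^{\oplus 2}\oplus E_8^{\oplus 2}$ of signature $(20,2)$, whose discriminant group is $\mathbb{Z}/3$ carrying the discriminant form of $A_2$. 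Since any admissible $K$ contains $H^2$, its orthogonal complement satisfies $K^{\perp}\subset L$, so $K^{\perp}$ is an even lattice of rank $21$ and signature $(19,2)$; concretely, writing $\langle\tau\rangle=K\cap L$ for a primitive $\tau\in L$, we have $K^{\perp}=\langle\tau\rangle^{\perp}$ inside $L$.

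Next I would compute the discriminant forms on both sides. On the K3 side, for a degree-$d$ polarization $h$ with $h^2=d$ the sublattice $\langle h\rangle^{\perp}\subset U^{\oplus 3}\oplus E_8(-1)^{\oplus 2}$ has cyclic discriminant group $\mathbb{Z}/d$ with form $\langle -1/d\rangle$, so after the Tate twist the lattice $H^2_{\mathrm{prim}}(S,\mathbb{Z})(-1)$ is even of rank $21$, signature $(19,2)$, with discriminant group $\mathbb{Z}/d$ and form $\langle 1/d\rangle$. On the cubic side I would compute the discriminant form of $K^{\perp}=\langle\tau\rangle^{\perp}\subset L$ using Nikulin's orthogonal-complement formalism, which expresses $\mathrm{disc}(K^{\perp})$ through $\tau^2$ and the way $\tau$ interacts with the $\mathbb{Z}/3$ contributed by the $A_2$-summand; here condition $(*)$ ensures that the isometry type of the pair $(L,\langle\tau\rangle)$, hence of $K^{\perp}$, depends only on $d$, so the computation is independent of the chosen $X\in\mathcal{C}_d$. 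Both candidate lattices are indefinite of rank $21$ with discriminant group of order $d$, and their rank exceeds the length of the discriminant group plus two, so Nikulin's uniqueness-in-genus criterion applies: they are isometric if and only if their discriminant forms coincide.

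The crucial and most delicate step, and the one I expect to be the main obstacle, is to show that this agreement of discriminant forms is governed exactly by condition $(**)$. Here one must track how the $A_2$-contribution of discriminant $3$ is absorbed into the cyclic group $\mathbb{Z}/d$, prime by prime. For each odd prime $p\mid d$ with $p\neq 3$, matching the forms reduces to solving a congruence whose solvability is equivalent to $-3$ being a square modulo $p$, i.e.\ to $p\equiv 1\pmod 3$; thus primes $p\equiv 2\pmod 3$ dividing $d$ obstruct the isometry. The remaining obstructions occur at the primes $2$ and $3$ and account, respectively, for the exclusion of $4\mid d$ and $9\mid d$. Assembling these local computations shows that the discriminant forms agree precisely when $d$ is not divisible by $4$, $9$, or any odd prime $p\equiv 2\pmod 3$, which settles both implications at once: under $(*)$, condition $(**)$ is equivalent to $K^{\perp}\simeq H^2_{\mathrm{prim}}(S,\mathbb{Z})(-1)$ as abstract lattices, and when $(**)$ fails the residual obstruction in the discriminant form cannot be removed by any choice of $K$.

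Finally I would promote the abstract isometry to a Hodge isometry and produce the surface $S$. The sub-Hodge structure on $K^{\perp}$ is of K3 type with $h^{3,1}=1$, so after the Tate twist it defines a point in the period domain attached to the lattice $\langle h\rangle^{\perp}$ of degree-$d$ polarized K3 surfaces. By the surjectivity of the period map together with the Torelli theorem there is a polarized K3 surface $S$ of degree $d$ realizing this Hodge structure, and composing with the lattice isometry above yields the desired Hodge isometry $K^{\perp}\simeq H^2_{\mathrm{prim}}(S,\mathbb{Z})(-1)$, completing the construction.
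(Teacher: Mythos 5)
The paper never proves this proposition---it is quoted verbatim from \cite{Has00}---so the benchmark is Hassett's original argument, and your outline reconstructs precisely that argument: identify $H^4_{\mathrm{prim}}(X,\mathbb{Z})\simeq A_2\oplus U^{\oplus 2}\oplus E_8^{\oplus 2}$, pass to $K^{\perp}=\langle\tau\rangle^{\perp}$ with $\tau$ a generator of $K\cap H^4_{\mathrm{prim}}$, compare with $H^2_{\mathrm{prim}}(S,\mathbb{Z})(-1)$ (disc form $\langle 1/d\rangle$) via Nikulin's discriminant-form and uniqueness-in-genus machinery, and match forms prime by prime. The local obstructions you predict---solvability at odd $p\mid d$, $p\neq 3$, governed by $-3$ being a square mod $p$, i.e.\ $p\equiv 1\pmod 3$, with separate obstructions at $2$ and $3$ giving $4\nmid d$ and $9\nmid d$---are exactly the content of Hassett's computation in Section 5 of \cite{Has00}, though you assert them rather than carry them out; as a plan this is the right heart of the proof. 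One ingredient you assert in passing does need its own argument: that the isometry class of the pair $(H^4_{\mathrm{prim}},\langle\tau\rangle)$, hence of $K^{\perp}$, depends only on $d$ is Hassett's uniqueness of labellings (an Eichler/Nikulin-type statement), and it does not follow from condition ($*$) alone.

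The one genuine gap is in your last step. Surjectivity of the period map for degree-$d$ K3 surfaces yields in general only a \emph{quasi-polarized} K3: the period point produced from $K^{\perp}$ may lie on a hyperplane where an algebraic class $\delta$ with $\delta^2=-2$ and $\delta\cdot f=0$ exists, in which case $f$ is big and nef but not ample, and the proposition demands a genuinely polarized $S$. To close this you must rule out such $\delta$: transported through your Hodge isometry, $\delta$ corresponds to an integral class $\delta'\in H^{2,2}(X,\mathbb{Z})$ with $(\delta')^2=2$ and $\delta'\cdot H^2=0$, so that $\langle H^2,\delta'\rangle$ is a labelling of discriminant $6$; smooth cubic fourfolds admit no such labelling, because $\mathcal{C}_6$ is swept out by (limits of) nodal cubics and lies outside the image of the period map of smooth cubics---a nontrivial fact Hassett establishes separately in \cite{Has00} and uses at exactly this point in the proof of his Theorem 5.1.3. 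Without this $\mathcal{C}_6$-avoidance argument (or an equivalent device), your construction delivers only a quasi-polarized K3 surface, and the statement as given---for \emph{any} $X\in\mathcal{C}_d$, not just a generic one---does not follow.
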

Addington and Thomas \cite{AT} compared Conjecture \ref{Kuzconj} with Proposition \ref{HasK3}. They proved that $d$ satisfies  ($*$) and ($**$) if and only if $\A_X$ is equivalent to a derived category of a K3 surface for a general $X \in \C_d$. Hassett \cite{Has00} and Addington \cite{Ad} studied the following condition.
\begin{itemize}
\item[($***$)] The equation $a^2d=2n^2+2n+2$ has an integral solution $(a,n)$. 
\end{itemize}
The condition ($***$) is related to Fano schemes of lines.  Galkin and Shinder \cite{GS} computed classes of rational cubic fourfolds in the Grothendieck ring of varieties based on the weak factorization theorem. They proved the following proposition.

\begin{prop}\label{GSthm}$($\cite{GS}$)$
Assume that the cancellation conjecture \rm{}(\cite{GS}, \rm{}Conjecture 2.7)\it{} or the weaker condition \rm{}(\cite{GS}, Remark 7.2) \it{}holds.
If a cubic fourfold $X$ is rational, then the Fano scheme of line on $X$ is birational to the Hilbert scheme of points on a K3 surface.
\end{prop}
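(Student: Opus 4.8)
The plan is to reproduce Galkin and Shinder's strategy: first establish a universal relation in the Grothendieck ring of varieties $K_0(\mathrm{Var}_{\CC})$ relating $X$, its Hilbert square, and $F(X)$, and then extract birational information from that relation under the stated hypothesis. The starting point is the rational map $X^{[2]} \dashrightarrow X$ sending a length-two subscheme $\xi \subset X$ to the residual point of the intersection of the spanned line $\ell_\xi = \langle \xi \rangle$ with the cubic $X$, which is well defined by B\'ezout whenever $\ell_\xi \not\subset X$. I would stratify $X^{[2]}$ by whether $\ell_\xi \subset X$. On the open stratum the fibres of the residual map are controlled by fixing the third point and letting the line through it vary, contributing a polynomial in $\LL$ times $[X]$; on the closed stratum $\xi$ is an arbitrary length-two subscheme of $\ell_\xi \cong \PP^1$, so this stratum is a $(\PP^1)^{[2]} = \PP^2$-bundle over $F(X)$. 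Assembling the strata with the motivic blow-up and bundle relations yields the identity
\[ [X^{[2]}] = (1+\LL^4)[X] + \LL^2\,[F(X)] \qquad \text{in } K_0(\mathrm{Var}_{\CC}). \]
This step is geometric bookkeeping, lengthy but not conceptually hard.

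Next I would feed rationality into this relation. If $X$ is rational, the weak factorization theorem resolves a birational map $X \dashrightarrow \PP^4$ by blow-ups and blow-downs along smooth centres of dimension $\le 3$, so the motivic blow-up formula expresses both $[X]$ and $[X^{[2]}]$ as modifications of $[\PP^4]$ and $[(\PP^4)^{[2]}]$ by $\LL$-multiples of classes of these centres. Substituting into the identity, the Lefschetz part cancels and the surviving correction must be reorganised; the delicate point is to show that this correction is exactly $\LL^2\,[S^{[2]}]$ for a K3 surface $S$, i.e.\ that the codimension-two part of the rational structure of $X$ is governed by a K3 surface. Granting this, one arrives at
\[ \LL^2\,[F(X)] = \LL^2\,[S^{[2]}] \qquad \text{in } K_0(\mathrm{Var}_{\CC}). \]

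Finally I would cancel $\LL^2$ and pass to birationality, and this is where the hypothesis and the genuine obstacle lie. Since Borisov proved that $\LL$ is a zero-divisor in $K_0(\mathrm{Var}_{\CC})$, one cannot cancel $\LL^2$ unconditionally; this is precisely why the cancellation conjecture (\cite{GS}, Conjecture 2.7), or the weaker condition (\cite{GS}, Remark 7.2), must be assumed. Granting it, I obtain $[F(X)] = [S^{[2]}]$, and reducing modulo $\LL$ via the Larsen--Lunts isomorphism $K_0(\mathrm{Var}_{\CC})/(\LL) \cong \ZZ[\mathrm{SB}]$ shows that $F(X)$ and $S^{[2]}$ are stably birational. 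Since both are irreducible holomorphic symplectic fourfolds of $K3^{[2]}$-type, hence non-uniruled, one upgrades stable birational equivalence to honest birational equivalence, concluding that $F(X)$ is birational to the Hilbert scheme $S^{[2]}$. I expect the two real obstacles to be isolating the correct (provable) weakening of the false cancellation conjecture that still legitimises dividing by $\LL^2$, and justifying the passage from equality of Grothendieck classes to birational equivalence of the two hyperk\"ahler fourfolds.
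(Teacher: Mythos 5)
The paper itself contains no proof of this proposition---it is quoted from Galkin--Shinder \cite{GS}---so your proposal has to be measured against their original argument, whose skeleton you have in fact correctly reconstructed: the Hilbert-square/Fano relation, weak factorization, the cancellation hypothesis as the licence to divide by $\LL^2$, Larsen--Lunts, and the MRC-quotient argument upgrading stable birationality to birationality for non-uniruled varieties of equal dimension. But two steps fail as written. First, your central identity is misstated. Stratifying $X^{[2]}$ as you propose, the open stratum $\{\xi : \ell_\xi \not\subset X\}$ is isomorphic to the incidence variety $\{(x,\ell) : x\in \ell\cap X,\ \ell\not\subset X\}$, i.e.\ a $\PP^4$-bundle over $X$ minus a $\PP^1$-bundle over $F(X)$, and the closed stratum is a $\PP^2$-bundle over $F(X)$; summing gives $[X^{[2]}]=[\PP^4][X]+\LL^2[F(X)]$, with coefficient $[\PP^4]=1+\LL+\LL^2+\LL^3+\LL^4$, not $1+\LL^4$. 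The coefficient $1+\LL^4$ belongs to the symmetric-square variant $[\mathrm{Sym}^2X]=(1+\LL^4)[X]+\LL^2[F(X)]$; you have conflated the two formulas, as an Euler-characteristic check confirms: $\chi(X^{[2]})=459=5\cdot 27+324$, whereas $2\cdot 27+324=378=\chi(\mathrm{Sym}^2X)$.

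Second, and more seriously, your ``delicate point\dots Granting this'' concedes exactly the heart of the proof, and in a form that is not available: nothing in weak factorization makes the correction term equal to $\LL^2[S^{[2]}]$ \emph{on the nose} in $K_0(\mathrm{Var}_{\CC})$ with $S$ a K3 surface, and no such integral identity is ever established in \cite{GS}. What is actually done is this: rationality gives $[X]=[\PP^4]+\LL\cdot M_X$ where $M_X$ is an integral combination of classes of smooth projective varieties of dimension at most $2$ (a blow-up along a centre $Z$ of codimension $c$ contributes $\LL[Z]+\cdots+\LL^{c-1}[Z]$, so every term of the defect $[X]-[\PP^4]$ is $\LL$ times a class of dimension $\le 2$); substituting into the relation and cancelling $\LL^2$---the only place the hypothesis of the proposition is used---one reduces modulo $\LL$ and works in $\ZZ[\mathrm{SB}]$, which is \emph{free} on stable birational classes, so the class of $F(X)$ must literally equal the class of a single summand arising from $\mathrm{Sym}^2 M_X$; the MRC argument rules out summands of dimension less than $4$, leaving $S_i\times S_j$ or $\mathrm{Sym}^2 S_i$ (birational to $S_i^{[2]}$) for surfaces $S_i$ occurring in $M_X$. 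That the surface is a K3 is then a \emph{conclusion}, extracted from birational invariants of $F(X)$: one gets $q=0$, $p_g=1$ and Kodaira dimension $0$ for $S$, forcing its minimal model to be a K3, while $h^{4,0}(F(X))=1$ excludes the product case, since a product of two surfaces with $q=0$ and $p_g+p_g'=1$ has $h^{4,0}=p_g\,p_g'=0$. As written your final step is also circular: your justification that $S^{[2]}$ is holomorphic symplectic (hence non-uniruled) presupposes the K3-ness that you granted rather than proved.
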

Borisov \cite{B} found a counterexample of the cancelation conjecture. Moreover, it is known that the weaker assumption is also false \cite{IMOU}. 
Addington \cite{Ad} proved the following proposition.
 \begin{prop}\label{Adthm}$($\cite{Ad}$)$
 Let $d$ be an integer satisfying \rm{(}$*$). \it{}The integer $d$ satisfies \rm{(}$**$) \it{} if and only if the Fano scheme $F(X)$ of lines on $X$ is birational to a moduli space of stable sheaves on a K3 surface. The integer $d$ satisfies \rm{(}$***$) \it{} if and only if the Fano scheme $F(X)$ of lines on $X$ is birational to the Hilbert scheme of two points on a K3 surface.
 \end{prop}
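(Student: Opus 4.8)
The plan is to translate the statement into a question about the weight-two Hodge structure on $H^{2}(F(X),\ZZ)$ and then into pure lattice arithmetic. By Beauville--Donagi, $F(X)$ is a \hkmfd{} of $K3^{[2]}$-type, and the Abel--Jacobi map gives a Hodge isometry, up to sign and a Tate twist, between $H^{4}_{\mathrm{prim}}(X,\ZZ)$ and $H^{2}_{\mathrm{prim}}(F(X),\ZZ)$, matching cup product with the Beauville--Bogomolov--Fujiki form. Throughout I would work with Markman's Mukai lattice $\widetilde{H}(F(X),\ZZ)\cong U^{4}\oplus E_{8}(-1)^{2}$, which carries a weight-two Hodge structure and a distinguished algebraic class $v_{0}$ with $v_{0}^{2}=2$ and $H^{2}(F(X),\ZZ)=v_{0}^{\perp}$. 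The two geometric inputs that feed the lattice theory are: (i) the global Torelli theorem for \hkmfds{} (Verbitsky) in Markman's Hodge-theoretic form, by which two $K3^{[2]}$-type manifolds are birational if and only if there is a Hodge isometry of their second cohomology lattices lying in the correct monodromy orbit; and (ii) the Mukai--Yoshioka theorem that for a K3 surface $S$ and a primitive Mukai vector $v$ with $v^{2}=2$, the moduli space $M_{v}(S)$ of stable sheaves is of $K3^{[2]}$-type with $H^{2}(M_{v}(S),\ZZ)\cong v^{\perp}\subset\widetilde{H}(S,\ZZ)$.

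For the first equivalence I would prove that $F(X)$ is birational to a moduli space of stable sheaves on some K3 surface if and only if the algebraic part $\widetilde{H}^{1,1}(F(X),\ZZ)$ contains a hyperbolic plane $U$ (equivalently, a primitive isotropic class of divisibility one). Indeed, splitting off such a $U$ exhibits its orthogonal complement, a copy of $U^{3}\oplus E_{8}(-1)^{2}$, as the weight-two Hodge structure of a genuine K3 surface $S$ by surjectivity of the period map; then $v_{0}$ is an honest Mukai vector on $S$ and $F(X)$ is birational to $M_{v_{0}}(S)$, while conversely $\widetilde{H}(S,\ZZ)$ always contains the algebraic plane $\langle(1,0,0),(0,0,1)\rangle$. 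For very general $X\in\C_{d}$ the lattice $\widetilde{H}^{1,1}(F(X),\ZZ)$ is a rank-three lattice of signature $(2,1)$ built from $\langle v_{0}\rangle$ and the Beauville--Donagi image of Hassett's labelling lattice $K_{d}$, so its isometry class depends only on $d$; the Hasse--Minkowski principle then reduces the presence of $U$ to local conditions at $2$, $3$ and the primes $p\equiv 2\ (\mathrm{mod}\ 3)$, which are exactly the divisibility constraints in ($**$). Alternatively, Hassett's Proposition \ref{HasK3} already supplies, under ($**$), a K3 surface $S$ with $K_{d}^{\perp}\cong H^{2}_{\mathrm{prim}}(S,\ZZ)(-1)$, hence a Hodge isometry of transcendental lattices $T(F(X))\cong T(S)$, and one feeds this into the Mukai--Yoshioka theorem.

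For the second equivalence the task is to detect when the moduli space can be taken to be a Hilbert scheme, i.e. when $v_{0}$ can be carried to $(1,0,-1)$. I would show that $F(X)$ is birational to $S^{[2]}$ for a K3 surface $S$ if and only if $\widetilde{H}^{1,1}(F(X),\ZZ)$ contains an isotropic class $u$ with $\langle u,v_{0}\rangle=1$: such a $u$ plays the role of $(0,0,1)$, and tensoring by line bundles normalizes $v_{0}$ to $(1,0,-1)$, whose Mukai vector is that of the ideal sheaf of two points, so $v_{0}^{\perp}=H^{2}(F(X),\ZZ)$ is identified with $H^{2}(S,\ZZ)\oplus\ZZ\delta$, $\delta^{2}=-2$, the Hilbert-scheme marking. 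This divisibility-one requirement is strictly stronger than merely containing $U$, and in terms of the rank-three lattice above it unwinds to the solvability of $a^{2}d=2n^{2}+2n+2$; the Eisenstein norm form $n^{2}+n+1$ that appears is exactly what the discriminant computation for $K_{d}$ produces, so the condition is precisely ($***$).

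The main obstacle is the faithful passage from Hodge isometry to birationality. Verbitsky's theorem produces a bimeromorphic map only from a Hodge isometry that is a parallel-transport operator, so I must control the monodromy group of $K3^{[2]}$-type manifolds and, in particular, the orientation (the $\mathrm{O}^{+}$-constraint) and the action on the discriminant group; it is Markman's explicit determination of this group that makes the two orbit conditions above both necessary and sufficient, and getting the orientation and the attendant index-two subtleties right is the delicate step. The remaining difficulty is arithmetic: reducing ``contains $U$'' and ``contains an isotropic $u$ with $\langle u,v_{0}\rangle=1$'' to the clean congruence statement ($**$) and the Pell-type statement ($***$), uniformly over all $d$ satisfying ($*$). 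Finally, since these are conditions on the Hodge structure alone, for $X$ very general in $\C_{d}$ they hold geometrically exactly when they hold at the lattice level, so the dichotomy genuinely depends only on $d$.
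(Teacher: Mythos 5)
This proposition appears in the paper only as a citation of Addington \cite{Ad} with no proof given, so the relevant comparison is with the cited source: your proposal is in substance a faithful reconstruction of Addington's own argument --- Markman's Mukai lattice $U^{4}\oplus E_{8}(-1)^{2}$ with the distinguished algebraic class $v_{0}$, $v_{0}^{2}=2$, $H^{2}(F(X),\mathbb{Z})=v_{0}^{\perp}$, the Verbitsky--Markman Torelli theorem with the monodromy-orbit and orientation caveats, the criterion ``$\widetilde{H}^{1,1}$ contains a hyperbolic plane'' for birationality to a moduli space of sheaves, and the criterion of an algebraic isotropic class pairing to $1$ with $v_{0}$ for birationality to $S^{[2]}$, which unwinds to the equation $a^{2}d=2n^{2}+2n+2$ of ($*$$*$$*$). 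Your outline is correct, including the necessary restriction to very general $X\in\mathcal{C}_{d}$ (equivalently, reading the statement as ranging over all labels of $X$) for the ``only if'' directions, so it matches the approach of the cited proof essentially step for step.
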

 The condition ($***$) is stronger than ($**$). For example, the integer $74$ satisfies ($**$), but it does not satisfy ($***$).  
By Proposition \ref{Adthm}, Fano schemes of lines on rational cubic fourfolds in (i), (ii) are birational to the Hilbert schemes of points on K3 surfaces.

The main result of this paper is the following.

\begin{thm}\label{main}
Let $X$ be a rational cubic fourfold in \rm{(ii),(iii)}\it{.} 
Then the Fano scheme of lines on $X$ is birational to the Hilbert scheme of points on a K3 surface.
\end{thm}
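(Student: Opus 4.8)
The two families call for different arguments, both resting on Addington's criterion (Proposition \ref{Adthm}).

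For the discriminant $26$ and $38$ cubics of (ii), the plan is to reduce everything to Proposition \ref{Adthm} by checking that $d=26$ and $d=38$ satisfy ($***$). I would simply exhibit integral solutions of $a^2d=2n^2+2n+2$: the pair $(a,n)=(1,3)$ works for $d=26$, since $2\cdot 3^2+2\cdot 3+2=26$, and the pair $(a,n)=(7,30)$ works for $d=38$, since $2\cdot 30^2+2\cdot 30+2=1862=38\cdot 7^2$. Because every member of $\C_{26}$ (resp. $\C_{38}$) carries the rank two sublattice of discriminant $26$ (resp. $38$) containing $H^2$, the forward implication of Proposition \ref{Adthm} applies to all of these cubics, in particular to the rational ones; hence their Fano schemes of lines are birational to $\mathrm{Hilb}^2$ of a K3 surface.

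The cubics of (iii) are the genuinely harder case. A cubic fourfold containing a plane $P$ has $\langle H^2,P\rangle$ of discriminant $8$ (here $P^2=3$ and $H^2\cdot P=1$), and $8$ satisfies neither ($**$) nor ($***$), so Proposition \ref{Adthm} says nothing directly. Instead I would use the structure theory of these fourfolds: projection from $P$ exhibits $X$ as a quadric surface bundle over $\PP^2$ whose discriminant sextic defines a degree two K3 surface $S$, and Hassett's rational examples are exactly those for which the associated Brauer class $\beta\in\mathrm{Br}(S)[2]$ vanishes. By Kuznetsov's theorem this vanishing upgrades the twisted equivalence $\A_X\simeq D^b(S,\beta)$ to an honest equivalence $\A_X\simeq D^b(S)$.

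Granting $\A_X\simeq D^b(S)$, I would realize $F(X)$ as a moduli space $M_\sigma(\A_X,v)$ of $\sigma$-stable objects for a Serre-invariant stability condition $\sigma$ and the natural class $v$ with $v^2=2$, and then transport $v$ through the equivalence to a primitive Mukai vector $v'\in\algmukai$ with $v'^2=2$, so that $F(X)$ is birational to the moduli space $M_S(v')$ of stable sheaves on $S$. Since $v'$ is algebraic, $M_S(v')$ and $\mathrm{Hilb}^2(S)=M_S(1,0,-1)$ have the same transcendental Hodge structure $T(S)$, so they are Hodge isometric on $H^2$; by the Torelli theorem for hyperK\"ahler manifolds they will be birational provided this Hodge isometry is a parallel transport operator. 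The main obstacle is precisely this last point: one must show that $v'$ lies in the same orbit as $(1,0,-1)$ under the group of Hodge isometries of $\algmukai$ fixing $T(S)$, which is a concrete orbit computation in the algebraic Mukai lattice of the degree two K3 surface $S$ rather than a formal consequence of $v'^2=2$.
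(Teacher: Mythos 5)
Your treatment of family (ii) is correct and is exactly the paper's argument: $d=26$ and $d=38$ satisfy ($*$$*$$*$) (your solutions $(a,n)=(1,3)$ and $(7,30)$ check out), so Proposition \ref{Adthm} applies directly to every member of $\C_{26}$ and $\C_{38}$, rational or not.

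For family (iii), however, your proposal has a genuine gap, and it is precisely the one you flag yourself at the end. After transporting the class through $\A_X\simeq D^b(S)$ to get $F(X)$ birational to a moduli space $M_S(v')$ with $v'^2=2$, nothing so far forces $M_S(v')$ to be birational to $\mathrm{Hilb}^2(S)$: the condition $v'^2=2$ alone is insufficient, as the paper's own remark about $d=74$ shows --- there $F(X)$ \emph{is} birational to a moduli space of stable sheaves on a K3 (($*$$*$) holds) but \emph{not} to a Hilbert scheme of two points (($*$$*$$*$) fails). So the ``concrete orbit computation'' you defer is the entire content of the theorem in this case, and by Addington's analysis it is equivalent to finding some $d$ with $X\in\C_d$ satisfying ($*$$*$$*$). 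The paper does exactly this, elementarily and without any derived-category input: Hassett's rational-section criterion (Proposition \ref{ratsec}) gives a class $\Sigma\in\Hodge$ with $\Sigma\cdot Q=1$; Lemma \ref{gyoretu} normalizes the rank-three lattice $\langle H^2,Q,\Sigma\rangle$ into two possible forms of discriminant $16k-3$ or $16k+5$; and then, setting $\Sigma(x,y)=xQ+y\Sigma$ and $d(x,y)=\disc\langle H^2,\Sigma(x,y)\rangle$ (equal to $8x^2+6xy+6ky^2$, resp.\ $8x^2+2xy+(6k+2)y^2$), one exhibits explicit integral solutions of $a^2d(x,y)=2n^2+2n+2$, namely $(a,x,y,n)=(1,1-3k,1,2(1-3k))$ and $(1,3k,1,6k)$. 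This produces a $d$ satisfying ($*$$*$$*$) with $X\in\C_d$, and Proposition \ref{Adthm} then yields the conclusion --- the same route as your part (ii), once the right rank-two sublattice is found. To repair your argument you would either carry out the Mukai-lattice orbit computation for the degree-two K3 surface $S$ (which amounts to the same quadratic-form arithmetic), or simply replace the whole moduli-theoretic detour by the lattice computation above. Note also that the paper's Section 3 proves the analogous statement for family (iv) (sextic del Pezzo fibrations, via Lemma \ref{gyoretu2}), which your proposal does not address.
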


The proof of Theorem \ref{main} will be done by computation for cohomology lattice.

\subsection{Plan of the paper}
In section 2, we study rational cubic fourfolds in (iii) and (iv) and prepare lemmas. 
In section 3, we prove Theorem \ref{main} . 

\subsection{Notation and Convention}
We work over the complex number field $\CC$. Cubic fourfolds means smooth hypersurfaces of degree three in $\PP^5$. 

\subsection*{Acknowledgements}
I would like to thanks to Nick Addington for giving the useful comment \cite{Add}. I would like to thanks to my former advisor Yukinobu Toda. The part of this work was supported by the program for Leading Graduate Schools, MEXT, Japan and Grant-in-Aid for JSPS Research Fellow 15J08505.  This work is supported by Interdisciplinary Theoretical and Mathematical Sciences Program (iTHEMS) in RIKEN. 

\section{Preliminaries} 
\subsection{Cubic fourfolds containing a plane}
Let $X$ be a cubic fourfold containing a plane $P$. Take the blow up $p: \X \to X$ of the plane $P$ in $X$. Then $\X$ has a structure of a quadric fibration
$\pi: \X \to \PP^2$ (See \cite{Has99}). Let $F$ be the cohomology class of a fiber of $\pi$ and  $Q:=p_*F$ the cohomology class of a quadric surface in $X$.
Let $K_8:=\langle H^2, Q \rangle$ be the sublattice of $\Hodge$ generated by classes $H^2$ and $Q$. The intersection matrix of $K_8$ is 
\[ \left( \begin{array}{cc}
3 & 2 \\
2 & 4 
\end{array} \right).\] 
Hassett proved the following proposition.
\begin{prop}$($\cite{Has99}$)$\label{ratsec}
The quadric fibration $\pi: \X \to \PP^2$ has a rational section if and only if there is a cohomology class $\Sigma \in \Hodge$ such that $\Sigma \cdot Q=1$. 
\end{prop}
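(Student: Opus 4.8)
The plan is to prove the two implications separately, working with intersection numbers on $\X$ and transporting them to $X$ along the blow-down $p \colon \X \to X$. Throughout I use the following geometry: for a general $3$-plane $\PP^3 \supset P$, the cubic surface $X \cap \PP^3 = P \cup Q_0$ is the plane $P$ together with a residual quadric surface $Q_0$, meeting $P$ along the conic $C_0 = Q_0 \cap P$. Since $C_0$ is a hyperplane section of $Q_0$, hence a Cartier divisor, the strict transform of $Q_0$ (which is a general fiber $F$ of $\pi$, the family of such $3$-planes being parametrized by $\PP^2$) is carried isomorphically onto $Q_0$ by $p$, and $p_*F = Q$.

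For the ``only if'' direction, suppose $\pi$ has a rational section $\sigma \colon \PP^2 \dashrightarrow \X$ and let $\Sigma' \subset \X$ be the closure of its image. As $\pi \circ \sigma = \mathrm{id}$, the surface $\Sigma'$ meets a general fiber $F$ in exactly one point, so $\Sigma' \cdot F = 1$. I would then set $\Sigma := p_*[\Sigma']$, which lies in $\Hodge$ because $\Sigma'$ is an algebraic surface. Since $p$ is an isomorphism away from the exceptional divisor $E$, and the intersection $\Sigma \cap Q_0$ can be arranged to avoid $P$ for general $Q_0$, the points of $\Sigma' \cap F$ correspond bijectively to those of $\Sigma \cap Q$, giving $\Sigma \cdot Q = 1$.

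The ``if'' direction is the heart of the matter. Given $\Sigma \in \Hodge$ with $\Sigma \cdot Q = 1$, the idea is to produce a rational point of the generic fiber $\mathcal{Q}_\eta$ of $\pi$ over the function field $K = \CC(\PP^2)$, which is exactly a rational section. First I would make $\Sigma$ algebraic: by the integral Hodge conjecture for cubic fourfolds, $\Sigma$ is represented by an algebraic cycle, whose strict transform $\tilde\Sigma$ on $\X$ satisfies $\tilde\Sigma \cdot F = 1$ by the same comparison of intersection points as above. Restricting $\tilde\Sigma$ to the generic fiber yields a zero-cycle of degree $1$ on the smooth quadric surface $\mathcal{Q}_\eta/K$; in particular $\mathcal{Q}_\eta$ carries a closed point of odd degree. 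By Springer's theorem on quadratic forms over odd-degree field extensions, $\mathcal{Q}_\eta$ then has a $K$-rational point, which spreads out to the desired rational section of $\pi$.

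The main obstacle is precisely this last direction: passing from the purely numerical condition $\Sigma \cdot Q = 1$ to an honest geometric section. The two nontrivial ingredients are the algebraicity of the Hodge class $\Sigma$, for which I rely on the integral Hodge conjecture for cubic fourfolds, and Springer's theorem, which is what upgrades an odd-degree zero-cycle on the quadric to a genuine rational point. I would also need to check along the way that $\mathcal{Q}_\eta$ is a smooth, hence nondegenerate, quadric over $K$, which holds because the discriminant locus of $\pi$ is a proper closed subset of $\PP^2$.
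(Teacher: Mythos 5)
The paper does not actually prove this proposition---it quotes it from \cite{Has99}---and your proposal is in substance a reconstruction of Hassett's original argument: use the algebraicity of $\Hodge$ (the integral Hodge conjecture for cubic fourfolds, due to Murre building on Zucker's work) to turn the numerical hypothesis into a cycle, then apply Springer's theorem to the generic fiber, a smooth quadric surface over $K=\CC(\PP^2)$ (smooth because the discriminant locus of $\pi$ is a sextic curve), and spread the resulting $K$-point out to a rational section. So the route is the right one, and both of your named ``nontrivial ingredients'' are exactly the ones the cited proof uses.

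The one step that does not survive scrutiny as written is the claim, invoked in both directions, that intersections can be arranged to avoid the plane so that points of $\Sigma' \cap F$ biject with points of $\Sigma \cap Q$, giving exactly $\Sigma \cdot Q = 1$ (respectively $\tilde{\Sigma}\cdot F = 1$). If $\Sigma$ meets $P$ in a curve $D$, this fails: the conics $C_0 = Q_0 \cap P$ move in a two-dimensional family, and the incidence variety of pairs $(x,t)$ with $x \in D \cap C_0(t)$ is two-dimensional and typically dominates the base $\PP^2$, so a \emph{general} $Q_0$ does pick up intersection with $\Sigma$ along $P$. What is true is a parity statement: writing $E$ for the exceptional divisor of $p$, with $j: E \hookrightarrow \X$ and $q: E \to P$, the class $\Sigma' - p^*\Sigma$ lies in $j_*q^*H^2(P,\ZZ)$ (the kernel of $p_*$ in the blow-up decomposition), and $F \cdot j_*q^*h = C_0 \cdot h = 2$ for the line class $h$ of $P \simeq \PP^2$, so $\Sigma \cdot Q \equiv \Sigma' \cdot F \equiv 1 \pmod{2}$; since $H^2 \cdot Q = 2$, adding a suitable multiple of $H^2$ then achieves $\Sigma \cdot Q = 1$ exactly. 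Likewise in the ``if'' direction it is cleaner to bypass the strict transform entirely: by the projection formula $p^*\Sigma \cdot F = \Sigma \cdot p_*F = \Sigma \cdot Q = 1$, so cutting the algebraic representative's total transform with the generic fiber yields a zero-cycle of odd degree, which is all Springer's theorem needs. With these adjustments your proof is complete and agrees with the one in \cite{Has99}.
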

Essentially, there are two types of intersection matrices of  rank three sublattices generated $H^2, Q$ and $\Sigma$ in Proposition \ref{ratsec}.
\begin{lem}$($\cite{AT}$)$\label{gyoretu}
Assume that there is a cohomology class $\Sigma \in \Hodge$ such that $\Sigma \cdot Q=1$. Then we can choose $\Sigma \in \Hodge$ so that 
the intersection matrix of the sublattice $\langle H^2, Q, \Sigma \rangle$ is
\[ \left( \begin{array}{ccc}
3 & 2 & 0 \\
2 & 4 & 1 \\
0 & 1 & 2k
\end{array} \right) \]
or
\[ \left( \begin{array}{ccc}
3 & 2 & 1 \\
2 & 4 & 1 \\
1 & 1 & 2k+1
\end{array} \right) \]
for some integer $k$. The first case occurs when $\mathrm{disc}\langle H^2, Q, \Sigma \rangle = 16k-3$ and the second case occurs when  $\mathrm{disc}\langle H^2, Q, \Sigma \rangle = 16k+5$.
\end{lem}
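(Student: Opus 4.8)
The plan is to fix the basis $H^2, Q$ and to normalize only the third generator $\Sigma$, following \cite{AT}. Writing $a := \Sigma \cdot H^2$ and $c := \Sigma \cdot \Sigma$, and recalling that $\Sigma \cdot Q = 1$ together with the $K_8$-block $\left(\begin{smallmatrix} 3 & 2 \\ 2 & 4\end{smallmatrix}\right)$, the Gram matrix of $\langle H^2, Q, \Sigma \rangle$ is determined by the two integers $a, c$. Since $H^2$ and $Q$ are fixed geometric classes, the only admissible replacements are $\Sigma \mapsto \Sigma' := \varepsilon \Sigma + m H^2 + n Q$ with $\varepsilon \in \{\pm 1\}$ and $m, n \in \ZZ$, and the constraint $\Sigma' \cdot Q = 1$ forces $m = -2n$ when $\varepsilon = 1$ and $m = 1 - 2n$ when $\varepsilon = -1$. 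Using $(Q - 2H^2)\cdot Q = 0$, $(Q - 2H^2)\cdot H^2 = -4$ and $(Q - 2H^2)^2 = 8$, these two families of moves send $a$ to $a - 4n$ and to $3 - a - 4n$ respectively while preserving $\Sigma' \cdot Q = 1$. Hence the reachable residues of $a$ modulo $4$ are $\{a, 3-a\}$, which is either $\{0,3\}$ or $\{1,2\}$; in the first situation I can arrange $a = 0$ and in the second $a = 1$.

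With $a \in \{0,1\}$ fixed, expanding the determinant gives $\disc\langle H^2, Q, \Sigma\rangle = 8c - 3$ in both cases, so the entire statement reduces to the parity of $c$. The crucial input is that the primitive cohomology $H^4_{\mathrm{prim}}(X,\ZZ) = (H^2)^{\perp}$ is an even lattice (it is isometric to $A_2 \oplus U^{\oplus 2} \oplus E_8^{\oplus 2}$); equivalently $H^2$ is a characteristic vector of the odd unimodular lattice $H^4(X,\ZZ)$. Concretely, for any $v \in H^4(X,\ZZ)$ the class $3v - (v \cdot H^2)H^2$ is orthogonal to $H^2$, hence lies in the even lattice $H^4_{\mathrm{prim}}(X,\ZZ)$ and has even square; expanding $\bigl(3v - (v\cdot H^2)H^2\bigr)^2 = 9 v^2 - 3(v\cdot H^2)^2$ modulo $2$ and using $(H^2)^2 = 3$ yields $v^2 \equiv v \cdot H^2 \pmod 2$. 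Applying this to $v = \Sigma$ gives $c \equiv a \pmod 2$.

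Combining the two steps finishes the argument. When $a = 0$ the parity constraint forces $c$ even, say $c = 2k$, and the Gram matrix is the first asserted matrix, of discriminant $16k - 3$; when $a = 1$ it forces $c$ odd, say $c = 2k+1$, and the Gram matrix is the second asserted matrix, of discriminant $16k + 5$. Because every admissible change of generator has determinant $\pm 1$, the discriminant is unchanged by the normalization, so it equals $16k - 3$ or $16k + 5$ according to the case; as these are congruent to $13$ and $5$ modulo $16$ respectively they can never coincide, and the value of $\disc\langle H^2, Q, \Sigma\rangle$ modulo $16$ decides which normal form occurs, exactly as claimed.

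The one genuinely delicate point is the parity constraint $c \equiv a \pmod 2$. The purely lattice-theoretic reduction of $a$ into $\{0,1\}$ does not by itself pin down the parity of $c$; for instance the abstract integral lattice $\left(\begin{smallmatrix} 3 & 2 & 0 \\ 2 & 4 & 1 \\ 0 & 1 & 1\end{smallmatrix}\right)$ fits neither normal form, since its type $\{0,3\}$ cannot be moved to $a = 1$ yet its $(3,3)$-entry is odd. It is precisely the evenness of $H^4_{\mathrm{prim}}(X,\ZZ)$, a genuine geometric feature of cubic fourfolds rather than a formal lattice manipulation, that excludes such lattices and makes the two listed normal forms exhaustive.
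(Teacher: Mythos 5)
Your proof is correct, and worth comparing carefully because for this particular lemma the paper gives no argument at all---it simply cites \cite{AT}---while the closest in-paper proof is that of the analogue for the sextic del Pezzo case, Lemma \ref{gyoretu2}. Your skeleton agrees with that proof: first normalize $\Sigma\cdot H^2$ by modifying $\Sigma$ by the two fixed classes, then pin down the parity of $\Sigma^2$. The differences are real but instructive. First, the paper's normalization uses only translations ($\Sigma\mapsto\Sigma-3aH^2+aS$ there), which suffices to reduce $H^2\cdot\Sigma$ modulo $3$ and yields three normal forms; in the plane case, translations preserving $\Sigma\cdot Q=1$ move $a=\Sigma\cdot H^2$ only by multiples of $4$, so your sign-flip move $\Sigma\mapsto-\Sigma+H^2+n(Q-2H^2)$, giving $a\mapsto 3-a$, is exactly what is needed to collapse the four residues mod $4$ into the two cases $a\in\{0,1\}$. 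Second, where the paper's proof of Lemma \ref{gyoretu2} obtains the parity of $\Sigma^2$ by invoking non-emptiness of $\C_{\disc\langle H^2,\Sigma\rangle}$ (Hassett's congruence $d\equiv 0,2 \pmod 6$), you prove the underlying fact $v^2\equiv v\cdot H^2\pmod 2$ directly from the evenness of $H^4_{\mathrm{prim}}(X,\ZZ)$; these inputs are equivalent, but your route is self-contained and sidesteps a small looseness in the paper's phrasing, which tacitly assumes $\langle H^2,\Sigma\rangle$ is a rank-two primitive sublattice so that ``$X\in\C_{\disc}$'' makes sense. Your supplementary observations---that the determinant equals $8c-3$ in both normalized cases, that the two discriminants are separated modulo $16$ and hence the discriminant decides the normal form, and that the abstract lattice $\left(\begin{smallmatrix}3&2&0\\2&4&1\\0&1&1\end{smallmatrix}\right)$ fits neither form, showing the parity constraint is genuinely geometric rather than formal---are all correct and clarify why the geometric input cannot be dispensed with. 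One phrasing nitpick: ``the only admissible replacements'' is overstated, since the lemma permits any $\Sigma'\in\Hodge$ with $\Sigma'\cdot Q=1$, not merely those lying in $\ZZ H^2+\ZZ Q+\ZZ\Sigma$; but since your restricted moves already reach the asserted normal forms and are determinant-$\pm1$ changes of basis of the fixed sublattice, this does not affect the correctness of the existence claim or of the discriminant computation.
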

\subsection{Cubic fourfolds containing an elliptic ruled surface}  
 Let $X$ be a cubic fourfold containing an elliptic ruled surface $T$ of degree $6$. Take the blow up $p: \X \to X$ of the elliptic ruled surface $T$ in $X$. Then $\X$ has a structure of a del Pezzo fibration $\pi: \X \to \PP^2$ (See \cite{AHTVA}). Let $E$ be the exceptional divisor of $p$. Then we have the following diagram. 
 \[ \xymatrix{E \ar@{^{(}-{>}} [r]^j \ar[d]_q& \tilde{X} \ar[d]_p \ar[rd]_\pi && \\  T \ar@{^{(}-{>}}[r] & X & \mathbb{P}^2 } \] 
 
 The cohomology group of $\X$ has the decomposition 
\[H^*(\X,\mathbb{Z})=p^*H^4(X,\mathbb{Z}) \oplus j_*q^*H^2(T,\mathbb{Z}).\] 
 Let  $F$ be the cohomology class of  a fibre of $\pi$ and  $S:=p_*F $ the cohomology class of a sextic del Pezzo surface in $X$.  Let $K_{18}:=\langle H^2, S \rangle$ be the sublattice of $\Hodge$ generated by classes $H^2$ and $S$. The intersection matrix of $K_{18}$ is 
\[ \left( \begin{array}{cc}
3 & 6 \\
6 & 18 
\end{array} \right).\] 

The following proposition is an analogue of Proposition \ref{ratsec}.
\begin{prop}$($\cite{Add}, \cite{AHTVA}$)$
The del Pezzo fibration $\pi : \X \to \PP^2$ has a rational section if and only if there is a cohomology class $\Sigma \in \Hodge$ such that $\Sigma \cdot S=1,2$.
\end{prop}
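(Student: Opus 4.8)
The plan is to reduce the statement to the arithmetic of the generic fibre and to read off the intersection numbers $\Sigma \cdot S$ as degrees of zero-cycles on it. Since $\pi : \X \to \PP^2$ is proper and the base is a smooth surface, the valuative criterion shows that a rational section exists if and only if the generic fibre $Y_\eta := \X_\eta$, which is a sextic del Pezzo surface over the function field $K := \CC(\PP^2)$, has a $K$-rational point. To link this with $\Hodge$, I would use that Hodge classes on a cubic fourfold are algebraic, so every $\Sigma \in \Hodge$ is represented by an algebraic surface in $X$; taking the strict transform in $\X$, restricting to a general del Pezzo fibre $F$, and applying the projection formula identifies $\Sigma \cdot S$ with the degree of the induced zero-cycle on $Y_\eta$, while conversely every zero-cycle on $Y_\eta$ spreads out to such a class. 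Thus the condition that some $\Sigma \in \Hodge$ satisfies $\Sigma \cdot S = 1, 2$ is equivalent to the existence of a zero-cycle of degree $1$ or $2$ on $Y_\eta$.

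The forward implication is then immediate. A rational section $\sigma : \PP^2 \dashrightarrow \X$ has image a surface meeting the general fibre in a single point; pushing it forward along $p$ produces a class $\Sigma \in \Hodge$ with $\Sigma \cdot S = 1$, so the numerical condition holds.

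For the converse I would argue through the structure theory of sextic del Pezzo surfaces. Such a surface $Y_\eta/K$ carries a degree-$3$ (Severi--Brauer) structure and a conic-bundle structure, giving Brauer classes of orders dividing $3$ and $2$ respectively, and the existence of a $K$-point is governed by the vanishing of these classes. A zero-cycle of degree $2$ provides a point after a quadratic base change; since a class of order $3$ that is split by a quadratic extension is already split (as $2$ and $3$ are coprime), the degree-$3$ obstruction vanishes, and one is left to resolve the degree-$2$ part and descend to an honest $K$-point, hence a rational section. The main obstacle is exactly this last step: a zero-cycle of degree $1$ or $2$ bounds the index of $Y_\eta$, but upgrading it to a genuine rational point is a delicate arithmetic phenomenon special to degree-$6$ del Pezzo surfaces (and to the field $\CC(\PP^2)$). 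This is the heart of the matter, and it is where the detailed analysis of \cite{AHTVA} and \cite{Add} is needed; the remaining parts of the argument are formal.
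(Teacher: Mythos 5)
Your proposal contains a genuine gap, and you flag it yourself: in the converse direction you reduce to producing a $K$-point of the generic fibre from a zero-cycle of degree $1$ or $2$, kill the order-$3$ Brauer class by a coprimality argument, and then declare the remaining order-$2$ obstruction to be ``the heart of the matter'' to be supplied by \cite{AHTVA} and \cite{Add}. But as you have set the problem up, that step is not merely delicate --- it is unavailable: a zero-cycle of degree $2$ on a sextic del Pezzo surface only bounds the index by $2$ and, by a restriction--corestriction argument, gives no information at all about the $2$-torsion Brauer class, so no descent argument can manufacture a rational point from that data alone. The precise citable input is Proposition 8 of \cite{AHTVA}: $\pi$ has a rational section if and only if there is a class $\Sigma' \in H^{2,2}(\X,\ZZ)$ with $\gcd(\Sigma' \cdot F, 6)=1$ --- and crucially $\Sigma'$ lives on the total space $\X$ and need not be pulled back from $X$. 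The paper's proof is then an elementary cohomology computation translating between $X$ and $\X$: writing $H^2(T,\ZZ)=\ZZ e \oplus \ZZ f$ with $K_T=-2e$, one computes $H_T=e+3f$ and $D=S\cap T=3e+3f$, hence $p^*S=F+j_*q^*(3e+3f)$, and for $\Sigma'=p^*\Sigma+j_*q^*(ae+bf)$ the key formula $\Sigma' \cdot F=\Sigma\cdot S-3(a+b)$. The classes $j_*q^*(ae+bf)$ supported on the exceptional divisor meet $F$ in multiples of $3$, and this is exactly what rescues the case $\Sigma\cdot S=2$: taking $\Sigma'=p^*\Sigma+j_*q^*e$ gives $\Sigma'\cdot F\equiv 5 \pmod 6$, coprime to $6$, so Proposition 8 applies. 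Your generic-fibre formulation sees only cycles coming from $X$ and therefore cannot access these degree-$\pm 3$ multisection classes; that is precisely where your degree-$2$ case stalls, and it is not a deficiency of the references but of the reduction.

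A smaller inaccuracy in your forward direction: pushing the class of a rational section forward along $p$ only yields $\Sigma\cdot S\equiv 1 \pmod 3$ (the exceptional contribution shifts the intersection number by multiples of $3$), not $\Sigma\cdot S=1$ on the nose; one must still normalize by replacing $\Sigma$ with $-\Sigma$ and adding multiples of $H^2$ (using $H^2\cdot S=6$) to land in $\{1,2\}$. The paper carries out this mod-$6$ bookkeeping explicitly, and the same normalization is what makes your claimed equivalence between the condition $\Sigma\cdot S=1,2$ and the existence of low-degree zero-cycles on the generic fibre literally correct.
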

\begin{proof}
By Proposition 8 in \cite{AHTVA}, $\pi$ has a rational section if and only if there exists a cohomology class $\Sigma' \in H^{2,2}(\X,\mathbb{Z})$ such that $\mathrm{gcd}(\Sigma' \cdot F, 6)=1$. The condition $\mathrm{gcd}(\Sigma' \cdot F, 6)=1$ is nothing but $\Sigma' \cdot F \equiv 1,5$ (mod $6$). 
So it is enough to show that the following two conditions are equivalent.
\begin{itemize}
\item[(1)] There exists a cohomology class $\Sigma \in \Hodge$ such that $\Sigma \cdot S=1,2$. 
\item[(2)] There exists a cohomology class $\Sigma' \in H^{2,2}(\X,\mathbb{Z})$ such that $\Sigma' \cdot F \equiv 1,5$ (mod $6$).
\end{itemize}

The second cohomology group $H^2(T,\mathbb{Z})$ is generated by the class of a section $e$ and the class of a fibre $f$ of the elliptic ruled surface $T$.  
Then we have $e^2=f^2=0$, $ef=1$ and $K_T=-2e$. Let $H_T$ be the restriction of the hyperplane class $H$ to $T$. By the proof of Theorem 2 in \cite{AHTVA}, $H_T^2=6, H_T \cdot K_T=6$ hold. So we get $H_T=e+3f$. Let $D:=S \cap T$ be the divisor class of $T$. Like the proof of Theorem 2 in \cite{AHTVA}, we can obtain $K_T=H_T-D$, that is, $D=3e+3f$. So we have $p^*S=F+j_*q^*(3e+3f)$.

First, we show that (1) implies (2). If $\Sigma \cdot S=1$ holds, we can see that $\Sigma' \cdot F=1$, where $\Sigma'=p^*\Sigma$, by the projection formula. If $\Sigma \cdot S=2$ holds, we can see that $\Sigma' \cdot F \equiv 5$ (mod $6$), where $\Sigma'=p^*\Sigma+j_*q^*e$, by the direct computation.  Next, we show the converse. There are a cohomology class $\Sigma \in \Hodge$ and $a, b \in \mathbb{Z}$ such that $\Sigma'=p^*\Sigma+j_*q^*(ae+bf)$. Then we have $\Sigma' \cdot F=\Sigma \cdot S -3(a+b)$. Since $\Sigma' \cdot F \equiv 1, 5$ (mod $6$), we have $\Sigma \cdot S \equiv 1,2,4, 5$ (mod $6$). Replacing $\Sigma$ by $-\Sigma$, we have $\Sigma \cdot S \equiv 1,2$ (mod $6$). Due to $H^2 \cdot S=6$, adding multiples of $H^2$, we can take $\Sigma$ such that $\Sigma \cdot S=1,2$. 

\end{proof}

The following lemma is an analogue of Lemma \ref{gyoretu}.
\begin{lem}\label{gyoretu2}
Assume that there is a cohomology class $\Sigma \in \Hodge$ such that $\Sigma \cdot S=c$, where $c=0,1$. Then we can choose $\Sigma \in \Hodge$ so that 
the intersection matrix of the sublattice $\langle H^2, S, \Sigma \rangle$ is 
\[ \left( \begin{array}{ccc}
3 & 6 & 0 \\
6 & 18 & c \\
0 & c & 2k
\end{array} \right) \]
or
\[ \left( \begin{array}{ccc}
3 & 6 & 1 \\
6 & 18 & c\\
1 & c & 2k+1
\end{array} \right) \]
or
\[ \left( \begin{array}{ccc}
3 & 6 & 2 \\
6 & 18 & c \\
2 & c & 2k
\end{array} \right) \]
for some integer $k$. These three cases occur when $\mathrm{disc}\langle H^2, S, \Sigma \rangle =36k-3c^2, 36k-3c^2+12c, 36k-3c^2+24c-72$ respectively.
\end{lem}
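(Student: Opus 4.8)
The plan is to mimic the proof of Lemma \ref{gyoretu}: record the two unknown intersection numbers of $\Sigma$, normalize the pairing with $H^2$ by adding to $\Sigma$ a suitable multiple of a class orthogonal to $S$, and then fix the parity of the self-intersection using the fact that $H^2$ is a characteristic vector of $H^4(X,\ZZ)$. Concretely, I would set $m := \Sigma \cdot H^2$ and $\ell := \Sigma^2$, so that, reading off the Gram matrix of $K_{18}=\langle H^2, S\rangle$, the intersection matrix of $\langle H^2, S, \Sigma\rangle$ in the ordered basis $(H^2, S, \Sigma)$ is
\[ \begin{pmatrix} 3 & 6 & m \\ 6 & 18 & c \\ m & c & \ell \end{pmatrix}. \]
The whole task then reduces to bringing $m$ into $\{0,1,2\}$ and determining the parity of $\ell$.

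For the normalization of $m$, consider the class $v := S - 3H^2 \in \Hodge$. It satisfies $v \cdot S = 18 - 3\cdot 6 = 0$ and $v \cdot H^2 = 6 - 3\cdot 3 = -3$, so replacing $\Sigma$ by $\Sigma + \beta v$ for $\beta \in \ZZ$ leaves the entry $\Sigma \cdot S = c$ unchanged while shifting $m$ by $-3\beta$. Choosing $\beta$ appropriately, I may therefore assume $m \in \{0,1,2\}$; the residue $m \bmod 3$ is precisely the invariant that selects one of the three matrices.

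The parity of $\ell$ comes from the lattice structure of $H^4(X,\ZZ)$: for a cubic fourfold this intersection form is odd and unimodular, and $H^2$ is a characteristic vector, so $\gamma^2 \equiv \gamma \cdot H^2 \pmod 2$ for every $\gamma \in H^4(X,\ZZ)$ (as is consistent with $S^2=18\equiv S\cdot H^2 = 6$ and $Q^2 = 4 \equiv Q \cdot H^2 = 2$). Applying this with $\gamma = \Sigma$ gives $\ell \equiv m \pmod 2$. Hence for $m = 0$ and $m = 2$ the diagonal entry $\ell$ is even, say $\ell = 2k$, while for $m = 1$ it is odd, say $\ell = 2k+1$. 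According as $m = 0,1,2$, this reproduces exactly the first, second, and third Gram matrix of the statement.

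Finally, expanding the $3\times 3$ determinant in each of the three normalized cases is a routine computation and yields the discriminants $36k - 3c^2$, $36k - 3c^2 + 12c$, and $36k - 3c^2 + 24c - 72$, matching the claim. The only non-formal ingredient is the characteristic-vector relation $\gamma^2 \equiv \gamma \cdot H^2 \pmod 2$ — the same input that drives Lemma \ref{gyoretu} — so beyond invoking this correctly I expect no genuine obstacle; everything else is elementary lattice bookkeeping.
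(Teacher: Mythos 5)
Your proof is correct, and the skeleton matches the paper's: your normalization via $v = S - 3H^2$ is literally the paper's replacement $\Sigma \mapsto \Sigma - 3aH^2 + aS$ (writing $H^2\cdot\Sigma = 3a+b$ with $0\le b\le 2$), and your determinant evaluations agree with the stated discriminants $36k-3c^2$, $36k-3c^2+12c$, $36k-3c^2+24c-72$. Where you genuinely diverge is the parity step. The paper does not use the characteristic-vector relation directly: it argues that since $\mathcal{C}_{\mathrm{disc}\langle H^2,\Sigma\rangle}$ is non-empty, Hassett's condition $(*)$ forces $\mathrm{disc}\langle H^2,\Sigma\rangle = 3\Sigma^2 - b^2 \equiv 0,2 \pmod 6$, from which the parity of $\Sigma^2$ in each case $b=0,1,2$ follows. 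You instead invoke $\gamma^2 \equiv \gamma\cdot H^2 \pmod 2$ for all $\gamma \in H^4(X,\mathbb{Z})$, which is true (by Wu's formula, the degree-four Wu class of a cubic fourfold is $h^2$ mod $2$) and yields the same three parities. The two inputs are closely related — Hassett's congruence in $(*)$ is itself derived from the characteristic element — but your version is more self-contained and slightly more robust: the paper's appeal to non-emptiness of $\mathcal{C}_d$ tacitly treats $\langle H^2,\Sigma\rangle$ as a rank-two primitive sublattice in the sense of Hassett's definition of $\mathcal{C}_d$, a point left unverified there, whereas the Wu-class congruence applies to any class with no saturation or nondegeneracy caveat. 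What the paper's route buys is brevity and the fact that it stays entirely within results already quoted in its introduction.
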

\begin{proof}
Write $H^2 \cdot \Sigma=3a+b$, where $a,b \in \mathbb{Z}$ and  $0 \leq b \leq2$. Replacing $\Sigma$ by $\Sigma-3aH^2+aS$,
we may assume that the intersection matrix is
\[ \left( \begin{array}{ccc}
3 & 6 & b\\
6 & 18 & c \\
b & c & \Sigma^2
\end{array} \right). \]
Since $\mathcal{C}_{\disc \langle H^2, \Sigma \rangle}$ is non-empty, we have 
$\disc \langle H^2, \Sigma \rangle = 3\Sigma^2-b^2 \equiv 0, 2$ (mod $6$).
We have the following:
\begin{itemize}
\item $\Sigma^2=2k$ for some $k \in \ZZ$ if $b=0$
\item $\Sigma^2=2k+1$ for some $k \in \ZZ$ if $b=1$
\item $\Sigma^2=2k$ for some $k \in \ZZ$ if $b=2$. 
\end{itemize}
Computing $\disc \langle H^2, S, \Sigma \rangle$ for $b=0,1,2$,  we obtain the desired result.
\end{proof}

 \section{Main results}
 In this section, we give the proof of the main result. We use the notation in section 3.
 Theorem \ref{main} is deduced from Proposition \ref{Adthm} and the following theorem.  
 \begin{thm}
 Assume one of the followings:
\begin{itemize}
\item[\rm{(1)}]\it{}A cubic fourfold $X$ contains a plane $P$ with a cohomology class $\Sigma \in \Hodge$ such that $\Sigma \cdot Q=1$.
\item[\rm{(2)}]\it{}A cubic fourfold $X$ contains an elliptic ruled surface $T$ of degree $6$ with a cohomology class $\Sigma \in \Hodge$ such that $\Sigma \cdot S=1$.
\item[\rm{(3)}]\it{}A cubic fourfold $X$ contains an elliptic ruled surface $T$ of degree $6$ with a cohomology class $\Sigma \in \Hodge$ such that $\Sigma \cdot S=2$.
\end{itemize}
Then there is an integer $d$ satisfying $(***)$ such that $X \in \C_d$. In particular, $F(X)$ is birational to the Hilbert scheme of two points on a K3 surface. 
 \end{thm}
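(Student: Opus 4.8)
\begin{plan}
The plan is to produce, in each of the three cases, an explicit algebraic class $T\in\Hodge$ for which the rank two lattice $\langle H^2,T\rangle$ has discriminant of the shape $2n^2+2n+2$, and then to quote Proposition \ref{Adthm}. Everything rests on the identity $\disc\langle H^2,T\rangle=3T^2-(H^2\cdot T)^2$, valid because $(H^2)^2=3$. Expanding $T$ in the basis furnished by Lemma \ref{gyoretu} (plane) or Lemma \ref{gyoretu2} (elliptic ruled surface), the right hand side becomes an explicit integral binary quadratic form in the two coordinates of $T$ other than the $H^2$-coordinate, which drops out. Before computing I would record two reductions. First, it is enough to find one $T$ with $d(T):=\disc\langle H^2,T\rangle=2n^2+2n+2$: such a $d(T)$ satisfies $(***)$ with $a=1$, and since $d(T)=2(n^2+n+1)$ one has $d(T)\equiv 0,2\pmod 6$ automatically, so $(*)$ holds as soon as $d(T)>6$. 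Second, primitivity is harmless: letting $\bar K$ be the saturation of $\langle H^2,T\rangle$ in $\Hodge$ and $m=[\bar K:\langle H^2,T\rangle]$, one has $X\in\C_{\bar d}$ with $\bar d=d(T)/m^2$, and the relation $m^2\bar d=2n^2+2n+2$ shows $\bar d$ again satisfies $(***)$, now with $a=m$. Thus it suffices to manufacture a suitable $T$ inside the rank three lattice, with no need to check saturation (and this is exactly why $(***)$ must be allowed to use $a>1$).

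The construction is a single ansatz, uniform in the integer $k$ of the two lemmas. In the plane case, writing $T=yQ+z\Sigma$, the form is $d(T)=8y^2+6yz+6kz^2$ for the first matrix of Lemma \ref{gyoretu} and $d(T)=8y^2+2yz+(6k+2)z^2$ for the second. Taking $T=(k-1)Q+\Sigma$ in the first case gives
\[
d(T)=8k^2-4k+2=2(2k-1)^2+2(2k-1)+2,
\]
and taking $T=kQ-\Sigma$ in the second gives
\[
d(T)=8k^2+4k+2=2(2k)^2+2(2k)+2,
\]
so $(***)$ holds with $n=2k-1$ and $n=2k$. For the elliptic ruled surface I would run the same scheme on each of the three matrices of Lemma \ref{gyoretu2}: write $d(T)$ as a binary form in $T=yS+z\Sigma$, substitute $y=\alpha k+\beta$ and $z=z_0$, and choose $z_0,\alpha,\beta$ so that $2d(T)-3$ is the square of a linear polynomial in $k$, i.e. so that $d(T)=2n^2+2n+2$ with $n$ linear in $k$. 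The decisive point is the scaling: $z_0=1$, $\alpha=1$ succeed when $\Sigma\cdot S=2$ (for instance $T=(k-1)S+\Sigma$ or $T=kS+\Sigma$ depending on the subcase), whereas when $\Sigma\cdot S=1$ one must instead take $z_0=2$ and $\alpha=z_0^2=4$ (for instance $T=(4k-1)S+2\Sigma$), after which again $d(T)=2n^2+2n+2$. In each of the resulting cases the final step is a routine polynomial identity.

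The main obstacle is not a single deep step but the organization of the case analysis together with the discovery of the correct ansatz. The delicate point is the elliptic ruled surface with $\Sigma\cdot S=1$: matching the coefficient of $k$ forces $\alpha\mid z_0^2$, and the naive choice $z_0=1$ then yields a non-integral $\beta$, so one is compelled to rescale to $z_0=2$, $\alpha=4$, a necessity that only surfaces after the matching is carried out. One must also dispatch the finitely many small $k$ for which the uniform family gives $d(T)\le 6$ and $(*)$ fails; these are handled by hand with a different vector of the rank three lattice (for example $T=2Q-\Sigma$ yields $d(T)=26$ in the first plane case when $k=1$). Once the correct family is identified in each of the eight Gram matrices, the theorem follows by feeding the resulting discriminant into Proposition \ref{Adthm}.
\end{plan}
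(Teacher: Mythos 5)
Your plan follows the paper's own argument essentially verbatim: reduce to the Gram matrices of Lemmas \ref{gyoretu} and \ref{gyoretu2}, read off the binary quadratic forms $d(x,y)=3T^2-(H^2\cdot T)^2$ (your forms agree with the paper's, including your corrected coefficient $6k-4$ where the paper misprints $5k-4$), exhibit explicit integral solutions of $a^2d=2n^2+2n+2$ with $a=1$ --- your elliptic-ruled-surface solutions such as $(4k-1,2)$ with $n=12k-2$ and $(k-1,1)$, $(k,1)$ coincide with the paper's, while your plane-case choices $(k-1,1)$ and $(k,-1)$ are smaller than but equivalent to the paper's $(1-3k,1)$ and $(3k,1)$ --- and conclude via Proposition \ref{Adthm}. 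Your two extra reductions (absorbing non-saturated $\langle H^2,T\rangle$ into the $a>1$ freedom of $(***)$, and patching the small-$k$ failures of $(*)$, e.g. $d=6$ at $k=1$ in the first plane case via $T=2Q-\Sigma$ with $d=26$) tighten points the paper passes over silently but do not change the method.
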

 \begin{proof}
  (1) Choose $\Sigma$ as in Lemma \ref{gyoretu}. Let $\Sigma(x,y):=xQ+y\Sigma \in \langle H^2, Q, \Sigma \rangle$ and $d(x,y)=\mathrm{disc}\langle H^2, \Sigma(x,y) \rangle$ for integers $x,y\in \mathbb{Z}$. Assume that $\mathrm{disc}\langle H^2, Q, \Sigma \rangle = 16k-3$. Then we have $d(x,y)=8x^2+6xy+6ky^2$. Consider the equation
\[ a^2(8x^2+6xy+6ky^2)=2n^2+2n+2 .\]
For example, $(a,x,y,n)=(1,1-3k,1,2(1-3k))$ gives an integral solution of this equation. Assume that $\mathrm{disc}\langle H^2, Q, \Sigma \rangle = 16k+5$. Then we have $d(x,y)=8x^2+2xy+(6k+2)y^2$. Similarly, $(a,y,z,n)=(1,3k,1,6k)$ gives an integral solution of the equation
\[ a^2(8x^2+2xy+(6k+2)y^2)=2n^2+2n+2. \]
So we can take an integer $d$ satisfying ($***$) such that $X \in \C_d$.
The proofs on the cases of (2) is similar. 

(2) Using Lemma \ref{gyoretu2} as in the proof of (1), it is enough to prove the existence of integral solutions of the following equations:
\begin{itemize}
\item[(a)]$a^2(18x^2+6xy+6ky^2)=2n^2+2n+2$
\item[(b)]$a^2(18x^2-6xy+(6k+2)y^2)=2n^2+2n+2$
\item[(c)]$a^2(18x^2-18xy+(5k-4)y^2)=2n^2+2n+2$.
\end{itemize}
For examples, there are following integral solutions:
\begin{itemize}
\item[(a)]$(a,x,y,n)=(1,4k-1,2,12k-2)$
\item[(b)]$(a,x,y,n)=(1,4k+1,2,12k+2)$
\item[(c)]$(a,x,y,n)=(1,4k-5,2,12k-18)$.
\end{itemize}

(3) Using Lemma \ref{gyoretu2} as in the proof of (1) and (2), it is enough to prove the existence of integral solutions of the following equations:
\begin{itemize}
\item[(d)]$a^2(18x^2+12xy+6ky^2)=2n^2+2n+2$
\item[(e)]$a^2(18x^2+(6k+2)y^2)=2n^2+2n+2$
\item[(f)]$a^2(18x^2-12xy+(6k-4)y^2)=2n^2+2n+2$.
\end{itemize}
For examples, there are following integral solutions:
\begin{itemize}
\item[(d)]$(a,x,y,n)=(1,k-1,1,3k-2)$
\item[(e)]$(a,x,y,n)=(1,k,1,3k)$
\item[(f)]$(a,x,y,n)=(1,k-1,1,3k-4)$.
\end{itemize}

 \end{proof}

\end{document}